\newcommand{\pg}{\Sigma}
\newcommand{\mon}{\varphi}
\newcommand{\obd}{(\Sigma,\varphi)}
\newcommand{\B}{\mathcal{B}}
\newcommand{\C}{\mathcal{C}}
\newcommand{\triple}{(\Sigma,\varphi,\Gamma)}
\newcommand{\intint}{\mathring{\cap}}
\newtheorem*{thm_1}{Theorem \ref{thm_tight}}
\newtheorem*{thm_2}{Theorem \ref{thm_surgery}}
\newtheorem{thm}{Theorem}[section]
\newtheorem{cor}[thm]{Corollary}
\newtheorem{lem}[thm]{Lemma}
\theoremstyle{definition}
\newtheorem{defin}[thm]{Definition}
\theoremstyle{remark}
\newtheorem{obs}[thm]{Observation}
\begin{document}

\title[Tightness is preserved by Legendrian surgery]{Tightness is preserved by Legendrian surgery}
\author{Andy Wand}
\keywords{contact structures, Legendrian surgery, tightness, Stein cobordism}
\subjclass[2010]{53D10, 57M25, 57R65}

\begin{abstract}
This paper describes a characterization of tightness of closed contact 3-manifolds in terms of supporting open book decompositions. The main result is that tightness of a closed contact 3-manifold is preserved under Legendrian surgery.

\end{abstract}

\maketitle
\section{Introduction}
Following the developments of Donaldson theory \cite{d} (and later the Seiberg-Witten equations and Taubes's Gromov invariants - see e.g. \cite{t}), smooth low-dimensional topology has become increasingly intertwined with complex and symplectic geometry. Along the way, the study of contact structures on 3-manifolds has been brought to a place of prominence, as these keep track of a complex/symplectic structure near a boundary component of a 4-manifold, allowing one to port the cut-and-paste tools of smooth 4-dimensional topology into the complex/symplectic categories. Contact structures split into two types: \emph{tight} and \emph{overtwisted}. A fundamental theorem of Eliashberg \cite{el} gives a complete classification of overtwisted structures, in particular showing that each homotopy class of plane fields contains a unique isotopy class of overtwisted contact structures. As such, if a contact structure is to carry any geometric information, it must be tight. The classification of tight structures however has remained largely open.

This paper describes a characterization of tightness of closed contact 3-manifolds in terms of supporting open book decompositions. In particular, we introduce the notion of \emph{consistency} of a mapping class of an oriented surface with boundary (equivalently, of an open book decomposition of a closed 3-manifold), and show:

\begin{thm}\label{thm_tight}
Let $M$ be a closed, oriented 3-manifold, and $\xi$ a positive co-oriented contact structure on $M$. Then the following are equivalent:

\begin{enumerate}

\item $\xi$ is tight.

\item Some open book decomposition supporting $(M,\xi)$ is consistent.

\item Each open book decomposition supporting $(M,\xi)$ is consistent.

\end{enumerate} 

\end{thm}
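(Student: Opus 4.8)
The plan is to bridge the three statements via the Giroux correspondence together with the convex-surface technology of Honda--Kazez--Mati\'c and Goodman; since the precise definition of consistency does not appear until later in the paper, I will assume only that it is a strengthening of the right-veering condition, formulated in terms of arcs, subsurfaces, and factorizations of the monodromy into positive Dehn twists. \emph{Step 1 (reconciling (2) and (3)).} The implication (3)$\Rightarrow$(2) is immediate from the existence of supporting open books. For (2)$\Rightarrow$(3) I would prove that consistency is unchanged under positive stabilization and under positive destabilization of an open book. A positive stabilization attaches a $1$-handle $h$ to the page and post-composes the monodromy with $\tau_c$, where $c$ meets the co-core of $h$ exactly once; the content is a normal-form lemma showing that any arc/subsurface configuration witnessing (in)consistency on the stabilized page can be isotoped to meet $h$ minimally and then either descends to the original page or is absorbed into $\tau_c$. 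Since by Giroux's theorem any two open books supporting $(M,\xi)$ admit a common positive stabilization, bidirectional stabilization-invariance of consistency yields (2)$\Leftrightarrow$(3).

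\emph{Step 2 (inconsistency $\Rightarrow$ overtwisted).} Granting (2)$\Leftrightarrow$(3), the implication (1)$\Rightarrow$(3) is equivalent to the contrapositive: if some open book $(\Sigma,\varphi)$ supporting $(M,\xi)$ is inconsistent, then $\xi$ is overtwisted. Here I would take the data witnessing inconsistency and cut the page along the relevant arcs so that, on one resulting subsurface, the monodromy fails the right-veering condition in a controlled, quantitative fashion; feeding this into the partial-open-book / contact-handle picture should produce a convex disk with a contractible dividing curve, i.e. an overtwisted disk (equivalently, a Goodman sobering arc). This is the step I expect to be the main obstacle: plain non-right-veering and plain sobering-arc conditions are known to be insufficient, so the definition of consistency must be engineered precisely so that its failure produces an honestly \emph{embedded} overtwisted disk rather than a merely immersed or partially detected one, and carrying out the cut-and-convexify construction at that level of generality is the real content of the theorem.

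\emph{Step 3 (overtwisted $\Rightarrow$ some inconsistent open book).} It remains to establish (2)$\Rightarrow$(1), and using (2)$\Leftrightarrow$(3) it suffices to show that an overtwisted $(M,\xi)$ admits at least one inconsistent supporting open book; then no supporting open book is consistent, so $\neg(1)$ implies $\neg(2)$. By Honda--Kazez--Mati\'c an overtwisted structure is supported by some open book that is not right-veering --- concretely one may take a negative stabilization of any supporting open book, whose new handle's co-core arc is dragged to the left by the added $\tau_c^{-1}$ --- and since consistency refines right-veering, such an open book is inconsistent. This closes the cycle (1)$\Rightarrow$(3)$\Rightarrow$(2)$\Rightarrow$(1), and combined with (2)$\Leftrightarrow$(3) from Step 1 all three statements are equivalent.
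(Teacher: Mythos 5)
Your overall architecture matches the paper's: the equivalence of (2) and (3) via stabilization invariance (which the paper derives from a basis-independence lemma proved by tracking overtwisted regions through arc-slides), the use of a negative stabilization and the Honda--Kazez--Mati\'c left-veering criterion to handle the overtwisted direction, and a Goodman-flavoured construction for the converse. However, your Step 2 --- the implication that an inconsistent open book supports an overtwisted structure --- is not actually carried out, and you say as much (``this is the step I expect to be the main obstacle \ldots the real content of the theorem''). The paper does not take the convex-surface/partial-open-book route you sketch. Instead it works directly with the defining data of an overtwisted region: given the arc collection $\Gamma=\{\gamma_1,\dots,\gamma_n\}$ bounding the region $A$, one takes the suspension of each $\gamma_i$ in the mapping torus, caps it over the binding with meridional discs to get embedded discs $D_i$, resolves the positive boundary intersections $\partial D_i\cap\partial D_{i+1}$ by adding triangles from a fixed page, and obtains an embedded annulus $C$ exactly one of whose boundary components bounds a disc in the page --- namely $A$ itself. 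Capping off with $A$ gives an embedded disc whose boundary is made Legendrian by Honda's Legendrian realization principle and whose Thurston--Bennequin number is computed to be zero by pushing off along the page. The conditions in the definition of an overtwisted region (alternation of corners between positive boundary points and negative interior points, isolation, and uniqueness of the disc) are engineered precisely so that this resolution-and-capping produces an embedded $tb=0$ disc; without knowing that definition you could not have supplied this, but the gap is real: your proposal contains no proof of the central implication.

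A secondary point: your Step 3 asserts that ``consistency refines right-veering,'' so a non-right-veering open book is inconsistent. This is true but requires an argument: one must check that the left-veering co-core of the negative stabilization handle actually produces an \emph{overtwisted region} in the technical sense --- in particular a bigon satisfying the uniqueness condition --- and the paper's strengthened statement further requires that this bigon can be made \emph{proper} with respect to an arbitrary curve system $L$ after suitable auxiliary stabilizations (this properness is what feeds into the Legendrian surgery application). The paper spends a figure's worth of explicit stabilizations ($\sigma_1,\sigma_2,\sigma_3$) verifying exactly this. Your Step 1 is likewise only a sketch, but it is the right sketch; the paper's actual mechanism is a chain of arc-slides each implemented by a pair of stabilizations, which is more delicate than ``isotope to meet the handle minimally'' but serves the same purpose.
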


In light of the correspondence theorem of Giroux \cite{gi}, Theorem \ref{thm_tight} reduces the study of tightness to the study of surface diffeomorphisms. It should be noted that, as our aim in this paper is mainly a specific application, we will restrict ourselves to a `stable' description of consistency. In contrast to the general theory developed in \cite{w2} (see \cite{w3} for an expository account), the stable version does not aim to give tools to check consistency of an arbitrary open book decomposition.

A symplectic (or Stein) 4-manifold comes equipped with an almost complex structure $J$ (i.e. an endomorphism of the tangent bundle whose restriction to each tangent space squares to $-Id$); as such, any boundary component $M$ comes with an induced plane field $\xi = TM  \cap JTM$. We say such a pair $(M,\xi)$ is \emph{(symplectically/Stein) fillable}.  While fillable contact structures are tight (by work of Gromov \cite{gr} and Eliashberg \cite{el3}), the converse is not necessarily true.

Again reaching to the analogy between the smooth and symplectic/Stein categories, a central notion in each is that of a handle attachment; for a 4-dimensional manifold, by far the most interesting case is that of a 2-handle attachment. It was shown by Eliashberg \cite{el2} and Weinstein \cite{we} that, if a 2-handle is attached to a symplectic/Stein 4-manifold  along a curve in the contact boundary everywhere tangent to the contact structure, with a framing coefficient one less than that determined by the contact structure, we may extend the symplectic/Stein structure in a unique way over the handle. The trace of this operation on the contact boundary is referred to as a \emph{Legendrian surgery}, and is a fundamental tool for constructing fillable contact manifolds. While of course these are all tight, this in itself does little to advance our understanding of non-fillable tight structures, as the relation between tightness and Legendrian surgery was little understood. Indeed the only known result in this direction, due to Honda \cite{ho2}, was an example of an \emph{open} tight contact manifold which becomes overtwisted through Legendrian surgery. Our main result then fills in this gap, showing that:

\begin{thm}\label{thm_surgery}
If $(M,\xi)$ is obtained by Legendrian surgery on tight $(M',\xi')$, for $M'$ a closed, oriented 3-manifold, and $\xi'$ a positive co-oriented contact structure, then $(M,\xi)$ is tight. 
\end{thm}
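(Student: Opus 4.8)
The plan is to transport the surgery to an operation on supporting open book decompositions and then apply Theorem~\ref{thm_tight} at each end. Legendrian surgery on $(M',\xi')$ is performed along some Legendrian knot $L\subset (M',\xi')$; smoothly it is surgery along $L$ with framing one less than the contact (Thurston--Bennequin) framing. By the standard realization results underlying the Giroux correspondence~\cite{gi}, after finitely many positive stabilizations we may choose an open book decomposition $(\Sigma,\varphi)$ supporting $(M',\xi')$ in which $L$ appears as a simple closed curve on a page, with the page framing of $L$ equal to its contact framing. A positive stabilization does not change the supported contact structure, so $(\Sigma,\varphi)$ still supports the tight manifold $(M',\xi')$; hence, by the implication $(1)\Rightarrow(3)$ of Theorem~\ref{thm_tight}, the open book $(\Sigma,\varphi)$ is consistent.

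Next I invoke the open book description of the Eliashberg--Weinstein $2$-handle attachment~\cite{el2,we}: attaching the Legendrian handle along $L$ with framing one less than the page framing has the effect of composing the monodromy with a single \emph{positive} (right-handed) Dehn twist $\tau_L$ about $L$, so that $(M,\xi)$ is supported by the open book $(\Sigma,\varphi\circ\tau_L)$. The positivity of the twist is exactly where the surgery coefficient ``$\mathrm{tb}-1$'', as opposed to ``$\mathrm{tb}+1$'', enters: a negative Dehn twist would correspond to a contact $(+1)$-surgery, which in general destroys tightness.

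The crux of the proof --- and the step I expect to be the main obstacle --- is the following monotonicity property: \emph{if $(\Sigma,\varphi)$ is consistent and $c$ is any simple closed curve on $\Sigma$, then $(\Sigma,\varphi\circ\tau_c)$ is consistent.} This is precisely the behaviour the stable formulation of consistency is designed to exhibit: consistency should be monotone with respect to the natural partial order on monodromies generated by positive stabilization and by pre- and post-composition with positive Dehn twists, so that inserting a positive twist can only move a monodromy towards consistency, never away from it. In practice one would argue contrapositively: a certificate of inconsistency of $(\Sigma,\varphi\circ\tau_c)$ --- a forbidden configuration witnessing the failure of consistency --- should, once the inserted positive twist $\tau_c$ has been cancelled or absorbed, descend to a certificate of inconsistency of $(\Sigma,\varphi)$ itself, contradicting the hypothesis. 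Making this reduction precise requires a careful analysis of how the combinatorial structure cutting out consistency transforms under an elementary positive modification of the monodromy; by comparison, the remaining ingredients above amount to bookkeeping with established correspondences.

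Granting this monotonicity lemma, $(\Sigma,\varphi\circ\tau_L)$ is a consistent open book decomposition supporting $(M,\xi)$, so the implication $(2)\Rightarrow(1)$ of Theorem~\ref{thm_tight} yields that $(M,\xi)$ is tight, as desired.
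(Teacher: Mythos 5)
Your reduction of the theorem to open-book language is the same as the paper's: realize the surgery curve $L$ on a page so that Legendrian surgery becomes composition of the monodromy with a positive Dehn twist $\tau_L$, and then apply Theorem \ref{thm_tight} on both sides. The problem is that everything after that reduction is deferred. The ``monotonicity property'' you isolate --- consistency of $(\pg,\mon)$ implies consistency of $(\pg,\mon\circ\tau_c)$ --- is not an auxiliary lemma; it \emph{is} Theorem \ref{thm_surgery} restated via the Giroux correspondence, and your paragraph about it (``a certificate of inconsistency \ldots should \ldots descend'') names the difficulty without resolving it. So the proposal contains the correct translation of the problem plus an acknowledgment that the translated problem is hard, but no proof of the hard part.

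The reason the descent is delicate, and the reason the paper's Sections \ref{sec_def}--\ref{sec_properties} drag a curve system $L$ through every definition, is that an overtwisted region certifying inconsistency of $(\pg,\mon\tau_L)$ lives on a stabilized page and a priori interacts with $L$ arbitrarily; simply ``cancelling'' the twist $\tau_L$ then scrambles the configuration rather than transporting it. The paper's actual argument needs four ingredients you don't supply: (i) the relative strengthening of Theorem \ref{thm_tight} (items (4)--(5)), which says that when $\xi$ is overtwisted one can choose the stabilized overtwisted region to be \emph{proper with respect to} $S'(L)$, i.e.\ with all interaction with the surgery curve confined to a neighborhood of a single negative corner --- this is what Corollaries \ref{cor_basis} and \ref{cor_independence} are proved relative to $L$ for; (ii) Lemma \ref{lem_destab}, to destabilize the region down to a single arc $\gamma$ mapped to the left while keeping $S'(L)$ on the page; (iii) Lemma \ref{lem_braid}, to see that inserting $\tau^{-1}_{S'(L)}$ is compatible with the stabilization sequence, so that $(\pg',\tau^{-1}_{S'(L)}S\mon)$ is simultaneously a stabilization of the left-veering open book and of $(\pg,\tau_L^{-1}\mon)$; and (iv) the right-veering criterion of Honda--Kazez--Mati\'c to conclude that $(M',\xi')$ is overtwisted, the desired contradiction. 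Without step (i) in particular --- control of where $L$ meets the overtwisted configuration --- your contrapositive strategy cannot be carried out, so the gap is essential rather than a matter of omitted routine detail.
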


Section \ref{sec_def} introduces terminology and definitions, while Section \ref{sec_properties} gathers some properties of consistency, showing in particular that it is both determined by any basis of arcs in the surface, and also invariant under stabilization (in the sense of Giroux), thus determines a property of the supported contact structure.  Section \ref{sec_proofs} is then devoted to the proofs of Theorems \ref{thm_tight} and \ref{thm_surgery}.

\subsection*{Acknowledgments}
We would like to thank Paolo Ghiggini, Patrick Massot, Vera V\'ertesi, and the referee for helpful comments and suggestions, and the Max Planck Institut f\"ur Mathematik, Harvard University, and Universit\'e de Nantes for support and hospitality. This work was partially supported by ERC Grant GEODYCON.

\section{Definitions}\label{sec_def}
\subsection{Preliminaries}
Throughout the paper, $M$ will refer to a closed, smooth, oriented 3-manifold, and $\xi$ will denote a (positive) co-oriented contact structure on $M$; i.e. $\xi$ is the kernel of some globally defined 1-form $\alpha$ on $M$, such that $\alpha \wedge d\alpha$ is a (positive) volume form for $M$. One says $\xi$ is \emph{overtwisted} if there is some embedded disc $D$ in $M$ such that the tangent plane of each point $p\in \partial D$ agrees with $\xi_p$; otherwise $\xi$ is \emph{tight}. An \emph{open book decomposition} for $M$ is a pair consisting of an embedded oriented link $B$ in $M$, as well as a fibration of the complement of $B$ over $S^{1}$, such that each fiber is the interior of a Seifert surface for $B$. We encode this structure as the pair $\obd$, where $\pg$, the \emph{page}, is the oriented Seifert surface, and $\mon$, the \emph{monodromy}, is the return map of the fibration. We consider $\mon$ as an element of $\pi_0Diff^+(\pg,\partial)$, the \emph{mapping class group} of $\pg$, which we will refer to simply as $MCG(\pg)$. The pair $\obd$ determines the open book decomposition up to a diffeomorphism of $M$, and is often referred to as an \emph{abstract} open book (see e.g. \cite{e}). When there is no fear of confusion we will drop the term `abstract'.

A central notion concerning open book decompositions is that of `stabilization', which corresponds to a plumbing of a Hopf band (see e.g. \cite{gi}). This operation may be encoded in an abstract open book as follows:

\begin{defin}\label{def_stab}
Let $\obd$ be an open book decomposition of $M$, and $\sigma$ a properly embedded arc in $\pg$. Let $\pg'$ denote the surface given by attaching a 1-handle to $\pg$ with attaching sphere $\partial \sigma$, and denote by $s \subset \pg'$ the simple closed curve gotten by taking the union of $\sigma$ with the core of the new handle. Then the pair $(\pg',\tau_s \circ \mon)$, where $\tau_s$ denotes the Dehn twist about $s$, and $\mon$ the obvious inclusion of the original $\mon$ (extended over the handle by the identity), is again an open book decomposition of $M$, referred to as a \emph{stabilization} of $\obd$, via $\sigma$.
\end{defin}

The relation between the above concepts is given most completely by the following `correspondence theorem' of Giroux:

\begin{thm}\label{thm_giroux}\cite{gi}
Let $M$ be a closed, oriented, smooth 3-manifold. Then there is a 1-1 correspondence between positive co-oriented contact structures on $M$ up to isotopy, and open book decompositions of $M$ up to isotopy and stabilization.
\end{thm}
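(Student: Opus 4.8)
The plan is to cite Giroux's correspondence theorem \cite{gi}; as its ingredients reappear below, I indicate the structure of the argument rather than reproduce it. The statement has two halves. The first: each abstract open book $\obd$ of $M$ determines a well-defined isotopy class of \emph{supported} contact structure --- one whose binding is a positive transverse link and the interiors of whose pages carry an area form of the shape $d\alpha$. The second, substantive, half: conversely every $(M,\xi)$ is supported by some open book, unique up to isotopy and the stabilization move of Definition \ref{def_stab}.

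For the first half I would start from the Thurston--Winkelnkemper construction: write an explicit contact $1$-form on $\pg \times [0,1]$, glue by $\mon$, extend over a standard neighborhood of the binding, and check the contact condition. Well-definedness up to isotopy holds because the space of supported contact forms attached to a fixed open book is convex --- a convex combination of two $1$-forms, each positive on pages and suitably expanding near the binding, is again of that type --- hence connected, so Gray stability gives a unique isotopy class. One then verifies that a stabilization via an arc $\sigma$ changes neither $M$ nor $[\xi]$: the new piece is a neighborhood of a positive Hopf band, whose plumbing is realized inside a Darboux ball, leaving the supported structure unchanged.

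The second half is where the real work --- and the main obstacle --- lies. Given $(M,\xi)$ one passes to convex surface theory: by genericity of convex surfaces and the Legendrian realization principle, build a \emph{contact cell decomposition} of $M$, a finite CW structure whose $1$-skeleton $G$ is Legendrian, whose $2$-cells $D$ each satisfy $\mathrm{tb}(\partial D) = -1$ relative to the disc framing, and on each of whose closed $3$-cells $\xi$ is the standard tight structure on a ball. A ribbon neighborhood $R(G)$ is then a page: its boundary is the binding, and $M \setminus R(G)$, deformation retracting onto the dual $1$-complex, fibers over $S^1$ cell by cell, with monodromy a product of positive Dehn twists. The uniqueness claim is the delicate point: one must show any two contact cell decompositions of $(M,\xi)$ are joined by a chain of elementary subdivisions and their inverses, and that each such move changes the associated open book exactly by a stabilization in the sense of Definition \ref{def_stab}. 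This Reidemeister-type analysis of contact cell decompositions is the technical core of \cite{gi}, and I would not attempt to redo it here.
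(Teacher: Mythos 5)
The paper does not prove this statement; it is quoted verbatim as Giroux's correspondence theorem with a citation to \cite{gi}, exactly as you do. Your sketch of the two halves (Thurston--Winkelnkemper plus convexity/Gray stability for existence and well-definedness of the supported structure, contact cell decompositions and their common refinements for the converse and for uniqueness up to stabilization) is the standard architecture of that reference, and correctly identifies the uniqueness step as the part one must defer to \cite{gi}, so there is nothing to add.
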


One says that a contact structure is \emph{supported by} each open book to which it is associated through this correspondence.

\subsection{Overtwisted regions}

Let $\pg$ be a closed surface with boundary. Throughout the paper, an \emph{arc} in $\pg$ will refer to a properly embedded arc. We will refer to a set $\Gamma$ of (oriented) disjoint arcs in $\pg$ as an \emph{(oriented) arc collection}, while an arc collection which cuts $\pg$ into a disc is a \emph{basis} (of $\pg$).

A main object of study in the paper will be `augmented' open books $\triple$, for $\Gamma$ an arc collection. As such, when stabilizing an augmented open book $\triple$, we will isotope $\Gamma$ such that the 1-handle is attached away from $\partial \Gamma$, so that we have an inclusion map $\iota$ of $\Gamma$ into the stabilized book such that the image is again an arc collection.

We will often be interested in \emph{stable} properties of augmented open book decompositions:

\begin{defin}
Let $P$ be some property of augmented open book decompositions. Then, given an augmented open book $\triple$, we say that $\triple$ \emph{stably} satisfies $P$ if there is some sequence of positive stabilizations after which the stabilized triple $(\pg',\mon',\iota(\Gamma))$ satisfies $P$.
\end{defin}
 
By convention, if $\gamma$ is an oriented arc in $\pg$, for an open book decomposition $\obd$, then its image $\mon(\gamma)$ is given the opposite orientation. In particular then for an oriented arc collection $\Gamma$, each $p \in \Gamma \cap \mon(\Gamma)$ may be given a sign, as follows: if the ordered pair in $T_p\pg$ consisting of the tangent vector along the element of $\Gamma$, followed by the tangent vector along the element of $\mon(\Gamma)$, gives the orientation of $\pg$ at $p$, then $p$ is \emph{positive}. Otherwise $p$ is \emph{negative} (Figure \ref{fig_signs}).

\begin{figure}[h!]
\centering \scalebox{1.1}{\includegraphics{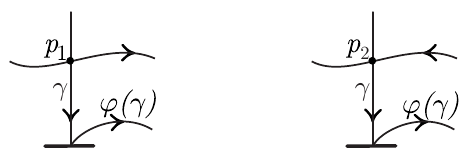}}
\caption[]{The points $p_1$ and $p_2$ are, respectively, positive and negative intersection points. This figure introduces the conventions, which will hold throughout the paper, that elements of a given arc collection are drawn as straight lines, their images under a mapping class are curved, and in any figure with multiple line weights, the thickest lines are reserved for $\partial \pg$.}
\label{fig_signs}
\end{figure}

\begin{defin}\label{def_ot_region}
Let $\obd$ be a open book decomposition, and $\Gamma$ an oriented arc collection in $\pg$ such that each point of $\partial\Gamma$ is positive in $\Gamma \cap \mon(\Gamma)$. An \emph{overtwisted  region} (in $\triple$) is an embedded disc $A \hookrightarrow \pg$, with $\partial A \hookrightarrow (\Gamma \cup \mon(\Gamma))$, such that:

\begin{enumerate}

\item Corners of $A$ alternate between points in $\partial \Gamma$, and negative points in the interior of $\pg$.
\item Each point of $\Gamma \cap \mon(\Gamma) \cap int(\pg)$ is a corner of $A$.
\item $A$ is the unique such disc.
\end{enumerate}
\end{defin}

 \begin{figure}[h!]
\centering \scalebox{1.1}{\includegraphics{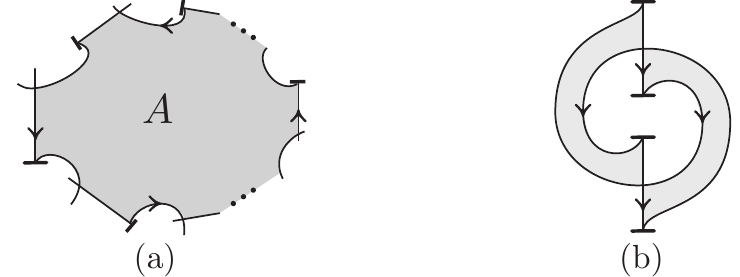}}
\caption[]{(a) An overtwisted region. (b) Each of the illustrated discs satisfy (1) and (2), but not (3).}
\label{fig_OTA}
\end{figure}

\begin{obs}
It should be emphasized that there is no assumption concerning minimality of $\Gamma \cap \mon(\Gamma)$; in particular, for the case $n=1$, an overtwisted region is a bigon.
\end{obs}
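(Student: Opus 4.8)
The content is light, and the plan is simply to unwind Definition~\ref{def_ot_region} in the extreme case, where $n$ denotes the number of arcs in $\Gamma$, so $n=1$ means $\Gamma=\{\gamma\}$ is a single arc and $\partial\Gamma$ is a pair of points. I would first record the ambient fact that, since the monodromy of an open book fixes $\partial\pg$ pointwise, an arc $\gamma$ and its image $\mon(\gamma)$ share their endpoints, and these two endpoints are exactly the points of $\partial\Gamma$, all of which lie in $\Gamma\cap\mon(\Gamma)$. Condition~(1) then says that around the circle $\partial A$ the corners alternate strictly between points of $\partial\Gamma$ and negative points of $\Gamma\cap\mon(\Gamma)\cap int(\pg)$; in particular there are equally many corners of each type and the edges of $\partial A$ alternate between sub-arcs of $\gamma$ and sub-arcs of $\mon(\gamma)$. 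For a single arc the only possibility consistent with conditions~(2) and~(3) is that there is a single interior intersection point $p$ and a single boundary corner $q$, whence $A$ is the bigon between $\gamma$ and $\mon(\gamma)$ running from $q$ to $p$. I would justify this by checking that any configuration in which $A$ turned at both endpoints of $\gamma$, or in which $\gamma$ met $\mon(\gamma)$ at a further interior point, would yield a second disc satisfying~(1) and~(2), contradicting the uniqueness clause~(3).

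Having identified $A$, I would then make the intended point explicit: this bigon is an innermost bigon between $\gamma$ and $\mon(\gamma)$, and removing it is exactly an isotopy of $\gamma$ rel $\partial$ that decreases $|\Gamma\cap\mon(\Gamma)|$. Thus an overtwisted region with $n=1$ can exist only because Definition~\ref{def_ot_region} imposes no minimality hypothesis on $\Gamma\cap\mon(\Gamma)$ --- which is precisely what the observation is flagging.

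The only step requiring genuine care is the enumeration in the first paragraph, i.e.\ ruling out quadrilateral (or higher) configurations when $\Gamma$ is a single arc; everything else is immediate. I would handle it either by the competing-disc argument sketched above or, more concretely, by putting $\gamma\cup\mon(\gamma)$ in a local normal form near the endpoints of $\gamma$ and reading off the bigon directly.
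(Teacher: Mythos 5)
The paper offers no argument for this Observation at all --- it is a remark, and the clause ``for the case $n=1$, an overtwisted region is a bigon'' is meant in the sense fixed by the conventions of Section \ref{sec_properties} and the proof of Theorem \ref{thm_tight}, where each arc $\gamma_i$ of the collection contributes exactly one edge of $A$, running from a point of $\partial\gamma_i$ to the single negative corner $y_i\in\gamma_i$, so that $A$ is a $2n$-gon and $n=1$ gives a bigon essentially by definition. The real content, which you capture correctly in your second paragraph, is that such a bigon between $\gamma$ and $\mon(\gamma)$ is precisely the configuration that minimal position would exclude; hence the emphasis that no minimality is assumed.

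Where your write-up has a genuine gap is in the one step you yourself flag: ruling out a quadrilateral (or larger) region supported by a single arc via the uniqueness clause (3). Your claim is that a disc $A$ turning at both endpoints of $\gamma$ would ``yield a second disc satisfying (1) and (2),'' but the candidate competitors do not in fact satisfy both conditions. Concretely, suppose $A$ were a 4-gon with corners $\partial^-\gamma, p_1, \partial^+\gamma, p_2$; then condition (2) forces $\gamma\cap\mon(\gamma)\cap int(\pg)=\{p_1,p_2\}$, and the middle segments $m\subset\gamma$ and $m'\subset\mon(\gamma)$ joining $p_1$ to $p_2$ cut out auxiliary discs, but each of these is either a bigon with two negative corners (violating the alternation in (1)) or a bigon with corners $\{\partial^\pm\gamma, p_i\}$ that omits the other interior intersection point as a corner (violating (2)). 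So clause (3), which quantifies only over discs satisfying (1) and (2), does not obviously produce a contradiction, and the ``competing-disc'' argument as stated does not close. If you want to exclude such configurations honestly you need a different mechanism --- for instance, an analysis of how the sign convention of Figure \ref{fig_signs}, the orientation of $\partial A$, and the constraint that corners on $\partial\pg$ must be convex (the reflex wedge at a boundary point would leave $\pg$) interact with the edge pattern of a 4-gon; alternatively, simply observe that the paper's standing convention that each arc contributes one edge and one negative corner makes the $n=1$ statement tautological, so that no such exclusion is required for anything the paper later uses.
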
 

As we shall see, existence of an overtwisted region in $\triple$ implies existence of an overtwisted disc in $(M,\xi)$. Finally:

\begin{defin}
 A class $\mon \in MCG(\pg)$ is \emph{inconsistent} if there is some arc collection $\Gamma$ such that, stably, $\triple$ has an overtwisted region. Otherwise, $\mon$ is \emph{consistent}.
\end{defin}

\section{Properties of consistency}\label{sec_properties}

The purpose of this section is to show firstly that consistency is determined by any basis of arcs, and using this, secondly that consistency is preserved under stabilization and destabilization, and thus is a property of the associated contact structure. Looking forward to the proof of Theorem \ref{thm_surgery}, we will in fact show a bit more. In what follows, a \emph{curve system} in a surface $\pg$ will refer to a collection $L$ of embedded arcs and closed curves in $\pg$, disjoint away from $\partial \pg$, and such that $\partial L \subset \partial \pg$ (in fact, all results in this section hold equally for collections whose components are neither disjoint nor embedded, but as our main application requires consideration of neither of these possibilities we have restricted to this more standard set-up).

\begin{defin}\label{def_proper}
Let $A$ be an overtwisted region in $\triple$, and $L$ a curve system. Then $A$ is \emph{proper with respect to $L$} if there is a negative corner $y$ of $A$ such that for any neighborhood $U$ of $y$, $L$ can be isotoped, fixing $\partial L$, such that $L\cap (\Gamma \cup \mon(\Gamma)) \subset U$.

\end{defin}

\subsection{Basis independence}

\begin{defin}\label{def_detect}
Let $\obd$ be an open book decomposition, $L$ a curve system and $\B$ a basis of $\Sigma$. We say $\B$ \emph{(stably) detects overtwistedness relative to $L$} if there is some arc collection $\Gamma$ such that:
\begin{enumerate}
\item Each element of $\Gamma$ is isotopic to an element of $\B$, and

\item there is a sequence of stabilizations $\triple \leadsto (\pg',S\mon,\iota(\Gamma))$ (where $S$ refers to the composition of the Dehn twists associated to the stabilizations), and a subsequence $S'$ of $S$, such that $(\pg',S\mon,\iota(\Gamma))$ has an overtwisted region $A$, proper with respect to $S'(L)$.
\end{enumerate}

\end{defin}

\begin{obs}It should be emphasized that elements of $\B$ are not assumed oriented, so the isotopy of condition (1) is not an isotopy of oriented arcs. In particular, $\Gamma$ may contain parallel arcs with opposite orientations.
\end{obs}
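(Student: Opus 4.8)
The plan is simply to unwind the relevant definitions, since the content of this observation is a consistency check on Definition \ref{def_detect} rather than a substantive assertion. First I would recall that, in the terminology of Section \ref{sec_def}, an \emph{arc collection} — and hence a \emph{basis}, being an arc collection that cuts $\pg$ into a disc — carries no orientation data; orientations enter only when one speaks of an \emph{oriented} arc collection. In Definition \ref{def_detect} the object $\B$ is introduced merely as ``a basis of $\pg$'', so it is unoriented, whereas the object $\Gamma$ there must be an oriented arc collection: this is forced by the fact that condition (2) refers to an overtwisted region, and Definition \ref{def_ot_region} requires that each point of $\partial\Gamma$ be positive in $\Gamma\cap\mon(\Gamma)$, a condition whose very statement presupposes a choice of orientation on each arc. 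Consequently the isotopy demanded in condition (1) can only be read as an isotopy of the underlying unoriented arcs: it asks that the unoriented arc underlying each component of $\Gamma$ be isotopic to some element of $\B$, with no constraint on orientations.

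Given this, the second assertion is immediate. Fix $b\in\B$. Nothing in Definition \ref{def_detect} prevents $\Gamma$ from containing two distinct components $\gamma_1,\gamma_2$ whose underlying unoriented arcs are each isotopic to $b$; and since $\gamma_1$ and $\gamma_2$ are oriented independently, they may carry the two opposite orientations of that common unoriented isotopy class. Such a pair is exactly what is meant by ``parallel arcs with opposite orientations'', so $\Gamma$ may indeed contain one. (This latitude is precisely what one exploits when replacing an overtwisted region over an arbitrary $\Gamma$ by one whose boundary arcs lie in a prescribed basis, since both co-orientations along the boundary of the region may be required.) There is no real obstacle in this argument; the only point worth flagging — and the one the observation does flag — is that one must not silently read ``isotopic'' in condition (1) of Definition \ref{def_detect} as ``isotopic as oriented arcs''.
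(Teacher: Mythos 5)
Your reading is correct, and it tracks the paper's own (implicit) reasoning exactly: the statement in question is an \emph{Observation}, i.e.\ a clarifying remark about the conventions of Definition \ref{def_detect} rather than a theorem with a proof, and the paper offers no argument beyond what you reconstruct by unwinding the definitions. You correctly locate the source of the asymmetry: arc collections and bases as introduced in Section \ref{sec_def} carry no orientation, while the $\Gamma$ of Definition \ref{def_detect} must be oriented because the overtwisted region of Definition \ref{def_ot_region} (invoked in condition (2)) is only defined for oriented arc collections, via the sign convention on $\Gamma\cap\mon(\Gamma)$. Hence ``isotopic'' in condition (1) means isotopic as unoriented arcs, and the parallel-arcs-with-opposite-orientations latitude follows. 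Your parenthetical about why this latitude is needed in practice is a reasonable gloss; it is not in the paper, but it does not overreach. No gap, no correction needed.
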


 Now, existence of such a basis clearly implies inconsistency. Our immediate goal is to show that if some basis of a given open book decomposition stably detects overtwistedness relative to some given curve system, then every basis does (relative to the same curve system). Our main tool for navigating among bases is the following:

\begin{defin}
An \emph{arc-slide domain} (in $\triple$) is a disc component $\Delta$ of $\pg$ cut along $\Gamma$ whose boundary contains exactly three (distinct) elements of $\Gamma$. 
\end{defin}

We pause to gather some notation, to be used throughout the subsection. Firstly, for a given oriented arc $a$, we will denote its endpoints by $\partial^-a$ and $\partial^+a$, such that the orientation points from $\partial^-a$ to $\partial^+a$. Then, let $\obd$ denote an open book decomposition, $\Gamma= \Gamma_A \cup \Gamma_\Delta$ an arc collection, and $L$ a curve system, such that:

\begin{enumerate}
\item $\Gamma_A$ is a minimal collection such that $(\pg,\mon,\Gamma_A)$ has an overtwisted region, which we label $A$. We label $\Gamma_A = \{\gamma_1,\gamma_2,\ldots,\gamma_n\}$ where indices increase around $\partial A$ in accordance with its positive orientation, and label the negative corners of $A$ by $y_i$, such that $y_i \in \gamma_i$. 

\item $A$ is proper with respect to $L$.

\item $(\pg,\mon,\Gamma)$ contains an arc slide domain $\Delta$, with edges $\Gamma_{\Delta} = \{\gamma_1,\gamma_b,\gamma_a\}$, which appear in that order around the boundary of $\Delta$ in the orientation given by $\gamma_1$, and such that $\partial L \cap \Delta = \emptyset$.

\item There is a basis $\B$ of $\pg$ such that each element of $\Gamma_A \cup \{\gamma_a\}$ is isotopic to some element of $\B$.

\end{enumerate}

\begin{figure}[h!]
\centering \scalebox{1.1}{\includegraphics{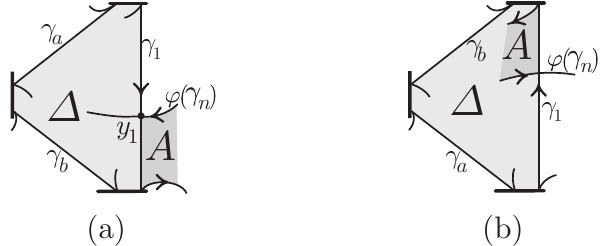}}
\caption[]{}
\label{fig_A}
\end{figure}

The set-up is indicated in Figure \ref{fig_A}(a) for the case that the orientation of $\gamma_1$ disagrees with the positive orientation of $\partial \Delta$, and Figure \ref{fig_A}(b) otherwise. Note that Definition \ref{def_ot_region} allows two possibilities for the orientations of $\Gamma_A$; we choose that which agrees with the positive orientation of $\partial A$. We will also require some terminology to refer to the conditions of Definition \ref{def_ot_region}, as follows: A \emph{region} (in $\triple$) will refer to any embedded disc in $\pg$ with boundary in $\Gamma \cup \mon(\Gamma)$. A region is \emph{boundary based} if exactly every 2nd corner is on $\partial \pg$, and each of these is positive. Finally a region is \emph{isolated} if each point $\Gamma \cap \mon(\Gamma) \cap int(\pg)$ is a corner of the region. In particular, then, an overtwisted region is a unique isolated boundary based region.

We have:
\begin{lem}\label{lem_1} The basis $\B':=(\B \setminus \{\gamma_1\}) \cup \{\gamma_b\}$ stably detects overtwistedness with respect to $L$.
\end{lem}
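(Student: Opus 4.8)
The plan is to perform the arc-slide that replaces $\gamma_1$ by $\gamma_b$ explicitly, and to track the overtwisted region $A$ (together with the curve system $L$) through this move, possibly after a preliminary stabilization. Recall that the arc-slide domain $\Delta$ is a disc with exactly the three edges $\gamma_1,\gamma_b,\gamma_a$ on its boundary, and that $\gamma_1$ is an edge of the overtwisted region $A$ as well, with $\gamma_a$ (and hence an element of $\B$ isotopic to it) available. First I would set up the arc-slide: sliding $\gamma_1$ over $\gamma_a$ across $\Delta$ produces an arc $\gamma_1'$ isotopic (rel endpoints, after the slide) to $\gamma_b$, so that $\Gamma_A' := (\Gamma_A \setminus \{\gamma_1\}) \cup \{\gamma_b\}$ is again an arc collection with each element isotopic to an element of $\B' = (\B \setminus \{\gamma_1\}) \cup \{\gamma_b\}$, giving condition (1) of Definition \ref{def_detect}. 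The point $y_1$, the negative corner of $A$ on $\gamma_1$, gets carried to a corresponding negative corner on $\gamma_b$.

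The substantive step is to produce, from $A$, an overtwisted region $A'$ in a suitable stabilization of $\triple$ whose arc collection contains $\Gamma_A'$. Here the issue is that replacing $\gamma_1$ by $\gamma_b$ may introduce new intersection points of $\gamma_b$ with $\mon(\Gamma_A')$ — in particular $\gamma_b$ may meet $\mon(\gamma_b)$, or the images of the other $\gamma_i$, in points that are not corners of any boundary-based isolated disc, destroying uniqueness (condition (3) of Definition \ref{def_ot_region}). The standard device, which I expect to be the technical heart of the argument, is to stabilize along a small arc $\sigma$ cutting off a neighborhood of $y_1$ (this is exactly where properness of $A$ with respect to $L$ is used: properness at $y_1$ lets us arrange, after the stabilization and applying the corresponding Dehn twist to $L$, that all intersections of $L$ with $\Gamma \cup \mon(\Gamma)$ sit inside that neighborhood, so $A'$ will be proper with respect to $S'(L)$ for the appropriate subsequence $S'$). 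The Dehn twist $\tau_s$ from the stabilization, composed with $\mon$, modifies $\mon(\gamma_b)$ in a controlled way near $y_1$; by choosing $\sigma$ (equivalently the curve $s$) to pass appropriately through the parts of $\Delta$ and $A$ near $y_1$, one can cancel the spurious intersection points and re-isolate a unique boundary-based region $A'$ with corners alternating between $\partial\Gamma$ and negative interior points, exactly as required by Definition \ref{def_ot_region}. One should also check that the minimality hypothesis on $\Gamma_A$ is used only to keep the bookkeeping finite and is not needed in the conclusion (Lemma \ref{lem_1} does not claim $\Gamma_A'$ is minimal).

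Concretely, the steps in order are: (i) describe the arc-slide of $\gamma_1$ over $\gamma_a$ across $\Delta$ and identify the resulting arc with $\gamma_b$, obtaining $\Gamma_A'$ and verifying condition (1) of Definition \ref{def_detect} against $\B'$; (ii) locate the new (potentially bad) intersection points of $\Gamma_A' \cup \mon(\Gamma_A')$ created by the slide, all of which can be pushed into a prescribed neighborhood $U$ of $y_1$ since $y_1 \notin \partial\Gamma$ and the slide only affects a neighborhood of $\Delta$; (iii) perform one (or a bounded number of) positive stabilization(s) along an arc near $y_1$, chosen so that the associated Dehn twist removes these intersection points and leaves a unique isolated boundary-based region $A'$; (iv) invoke properness of $A$ at $y_1$ with respect to $L$ to conclude properness of $A'$ with respect to $S'(L)$, where $S'$ is the subsequence of stabilizations "seen" by $L$; then $\B'$ stably detects overtwistedness relative to $L$ by Definition \ref{def_detect}. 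The main obstacle is step (iii): verifying that the stabilizing curve can always be chosen to cancel exactly the intersections introduced by the arc-slide while preserving the alternating-corner and uniqueness conditions — this is a local picture near $y_1$ and $\Delta$, but getting the orientations and signs of the new corners right (so they are genuinely negative interior corners and $\partial$-corners remain positive) is the delicate part, and is presumably where Figure \ref{fig_A} and a careful case analysis on the two orientation cases of $\gamma_1$ versus $\partial\Delta$ will be needed.
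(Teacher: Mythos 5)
There is a genuine gap, and it sits exactly where you locate ``the main obstacle.'' You correctly identify that replacing $\gamma_1$ by $\gamma_b$ creates new intersections of $\gamma_b$ with $\mon(\Gamma)$ and of $\mon(\gamma_b)$ (equivalently, after the slide, of $\mon(\gamma_a)$) with $\Gamma$, and that stabilizations must remove them. But your claim in step (ii) that all of these new intersection points ``can be pushed into a prescribed neighborhood $U$ of $y_1$'' is false: $\mon$ is a global mapping class, so $\mon(\gamma_a)$ and $\mon(\gamma_b)$ can wind over the whole surface and meet $\Gamma_A$ and $L$ far from both $y_1$ and $\Delta$. Consequently your proposed remedy in step (iii) --- one or a few stabilizations along a \emph{small arc near $y_1$} --- is the wrong tool; a local twist near $y_1$ cannot cancel intersections that occur elsewhere. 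The paper instead stabilizes along an arc $\sigma_1$ isotopic to $\gamma_b$ (arranged to meet $\gamma_a$ and $\gamma_b$ once each) and an arc $\sigma_2$ isotopic to $\mon(\gamma_a)$; these global stabilizations are precisely what force $\gamma_b \mathbin{\mathring{\cap}} (\mon(\Gamma)\cup L)$ and $(\Gamma\cup L)\mathbin{\mathring{\cap}}\mon(\gamma_a)$ each to reduce to the single point $y$, which becomes the new negative corner.

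Beyond that, the actual construction of $A'$ and the verification of uniqueness --- which is the substance of the lemma --- are deferred in your write-up rather than carried out. In the paper, $A'$ is built explicitly by excising a $4$-gon ($Z\subset\mon(\Delta)$, or a $4$-gon of $A\cap\Delta$ in the other orientation case) from $A$ and extending across $\gamma_1$ into $\Delta$ (respectively into $\mon(\Delta)$); isolation follows from the displayed inclusion of $\Gamma\mathbin{\mathring{\cap}}\mon(\Gamma)$ into the corners of $A$, $X$, $Z$ and $\{y\}$; and uniqueness follows by showing that any competing region $B'$ for $A'$ would convert back into a competing region $B$ for $A$, contradicting condition (3) for $A$. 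Your transfer of properness from $y_1$ to the new corner on $\gamma_a$ is in the right spirit, but it only works once the two global stabilizations above have been performed, since otherwise $L$ and the new arcs interact outside any neighborhood of $y_1$. Note also that the new detecting collection is $(\Gamma\setminus\{\gamma_1\})\supset\{\gamma_a,\gamma_b\}$, with $\gamma_a$ already isotopic to an element of $\B\cap\B'$ by hypothesis (4), so no separate identification of ``the slid arc'' with $\gamma_b$ is needed.
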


\begin{proof}
We would like to assume that $\Gamma_A$ contains neither $\gamma_a$ nor $\gamma_b$. As such, observe that, if $\Gamma_A$ contains $\gamma_a$, we may simply replace it in $\Gamma_A$ with a copy pushed slightly out of $\Delta$ (and similarly for $\gamma_b$). We then orient $\gamma_a$ and $\gamma_b$ to disagree with the orientation given to $\partial\Delta$ by $\gamma_1$.

We will begin with a simplification of our data. In particular, let $\sigma_1$ denote an arc in the isotopy class of $\gamma_b$, isotoped in a neighborhood of $\Delta$ to intersect each of $\gamma_a$ and $\gamma_b$ exactly once, and consider the stabilization via $\sigma_1$. As in Definition \ref{def_stab}, we denote the closed curve obtained as the union of $\sigma_1$ with the core of the new handle by $s_1$. Observe then that (using the symbol $\intint$ to denote intersections away from $\partial\pg$), $\gamma_b \intint (\tau_{s_1}\mon(\Gamma) \cup \tau_{s_1}(L))$ is a single point $y \in\tau_{s_1}\mon(\gamma_a)$, where $y$ is an endpoint of a $\partial \pg$-parallel component of $\tau_{s_1}\mon(\gamma_a) \cap \Delta$ with other endpoint $\partial^+\gamma_a$ (Figure \ref{fig_A_stab}(a)). Now, $\sigma_1 \cap \Gamma_A = \emptyset$, and $\tau_{s_1}(L)$ is isotopic to $L$ in a neighborhood of $A$, so we may for notational simplicity assume $\mon$ is a composition with the stabilizing twist $\tau_{s_1}$, i.e. we relabel, setting $\mon := \tau_{s_1}\mon$, and $L:=\tau_{s_1}(L)$.

Similarly, letting $\sigma_2$ denote an arc in the isotopy class of $\mon(\gamma_a)$, isotoped in a neighborhood of $\mon(\Delta)$ to intersect each of $\mon(\gamma_a)$ and $\mon(\gamma_b)$ exactly once, $(\Gamma \cup L) \intint\tau_{s_2}\mon(\gamma_a)$ is again the single point $y$ (Figure \ref{fig_A_stab}(b)). We again re-label, setting $\mon := \tau_{s_2}\mon$ (and not changing $L$).

\begin{figure}[h!]
\centering \scalebox{1.1}{\includegraphics{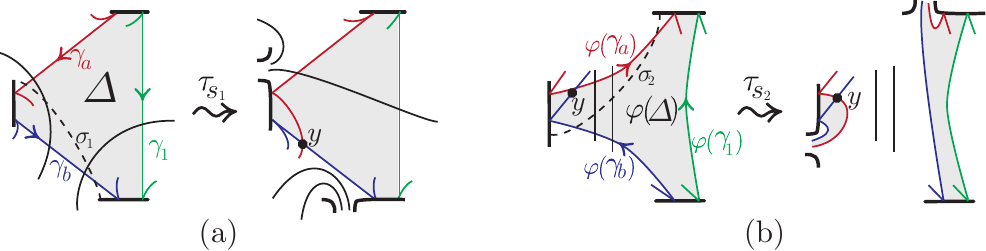}}
\caption[]{(a) The stabilizing arc $\sigma_1$, and the result of the stabilization. (b) The stabilizing arc $\sigma_2$, and the result of the stabilization.}
\label{fig_A_stab}
\end{figure}

We go through the remainder of the proof under the additional condition that the orientation of $\gamma_1$ does not agree with the positive orientation of $\partial\Delta$, then indicate the changes necessary for the remaining case.

Suppose firstly that $A$ is a bigon (Figure \ref{fig_bigon}(a)). After the above simplification then, each of $\gamma_a$ and $\gamma_b$ is mapped into $\Delta$ from its positive endpoint, exiting through the other. In particular, $\{\gamma_a,\gamma_b\}$ determines an isolated boundary based 4-gon region $A' \subset \Delta$ (Figure \ref{fig_bigon}(b)), proper with respect to $L$. Supposing then that $A'$ were not the unique such region, any other is an incident 4-gon with positive corners $\partial^-\gamma_a$ and $\partial^-\gamma_b$, and in particular can have no intersection with $\mon(\gamma_1)$, which is then isotopic to $\gamma_1$, giving an incident bigon $B$ for $A$ (Figure \ref{fig_bigon}(c)), a contradiction. 

\begin{figure}[h!]
\centering \scalebox{1.1}{\includegraphics{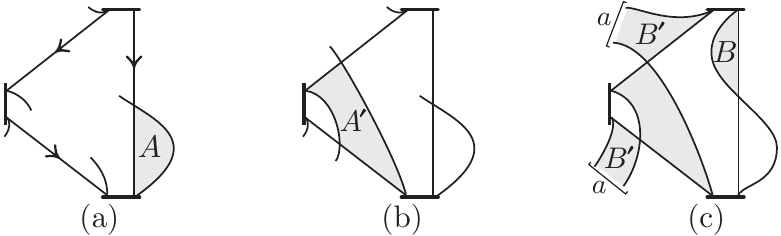}}
\caption[]{(a) The bigon $A$. (b) The 4-gon $A'$. (c) Incident regions $B'$ for $A'$, and $B$ for $A$. This figure introduces the convention, followed through the remainder of the proof, that brackets with like symbol are identified.}
\label{fig_bigon}
\end{figure}

For the case that $A$ is not a bigon (Figure \ref{fig_assumptions}(a)), we have a similar argument, but a bit more to keep track of. Now, after the stabilization via $\sigma_1$, $\gamma_b \cap (\mon(\Gamma_A)\cup L) = \emptyset$, so any intersection of $\Delta$ with $\mon(\Gamma_A)\cup L$ is an arc connecting $\gamma_1$ to $\gamma_a$. By isolation of $A$, $\Delta \cap \mon(\Gamma_A)$ is exactly two arcs: one along $\mon(\gamma_1)$, with endpoint $\partial^-\gamma_1$, the other along $\mon(\gamma_n)$, with endpoint $y_1$. These arcs then are edges of a 4-gon $X \subset \Delta$, such that $(\Gamma_\Delta \intint \mon(\Gamma_A)) \subset \{$corners of $X\}$ (Figure \ref{fig_assumptions}(b)).

Similarly, after the stabilization via $\sigma_2$, $\mon(\gamma_a) \cap (\Gamma_A \cup L) = \emptyset$, so any intersection of $\mon(\Delta)$ with $\Gamma_A \cup L$ is an arc connecting $\mon(\gamma_1)$ to $\mon(\gamma_b)$. There are again exactly two arcs in $\mon(\Delta) \cap \Gamma_A$, one along $\gamma_2$ with endpoint $y_2$, the other along $\gamma_1$ with endpoint $\partial^+ \gamma_1$ (Figure \ref{fig_assumptions}(b)). Again, each is an edge of a 4-gon $Z \subset \mon(\Delta)$, such that $(\Gamma_A \intint \mon(\Gamma_\Delta)) \subset \{$corners of $Z\}$.

Combining the above, we have 
\begin{align*}
\Gamma \intint \mon(\Gamma) &=  (\Gamma_A \intint \mon(\Gamma_A)) \cup (\Gamma_\Delta \intint \mon(\Gamma_A)) \cup (\Gamma_A \intint \mon(\Gamma_\Delta)) \cup (\{\gamma_a,\gamma_b\} \intint \mon(\{\gamma_a,\gamma_b\})) \\
&\subset \{ \textrm{corners of }A\} \cup \{ \textrm{corners of }X\} \cup \{ \textrm{corners of }Z\}  \cup \{y\}.
\end{align*} 

In particular, then $(\Gamma \setminus \{\gamma_1\}) \intint \mon(\Gamma  \setminus \{\gamma_1\})$ are the negative corners of an isolated boundary based region $A'$, constructed by removing $Z$ from $A$, and extending the result over $\gamma_1$ into $\Delta$ (Figure \ref{fig_assumptions}(c)). Moreover, if $L$ can be isotoped such that $L \cap (\Gamma_A \cup \mon(\Gamma_A))$ lies in a neighborhood of $y_1$, then it may also be isotoped to intersect $(\Gamma \setminus \{\gamma_1\}) \cup \mon(\Gamma \setminus \{\gamma_1\})$ in a neighborhood of the corner of $A'$ interior to $\gamma_a$. In particular then, $A'$ is proper with respect to $L$. Moreover, if $B'$ were any other region, then the region $B$ obtained by removing $\mon(\Delta) \setminus Z$ from $B'$ and extending the result over $X$ would be a region in the original data, contradicting our assumption that $A$ be an overtwisted region. We conclude that $A'$ is overtwisted.

\begin{figure}[h!]
\centering \scalebox{.9}{\includegraphics{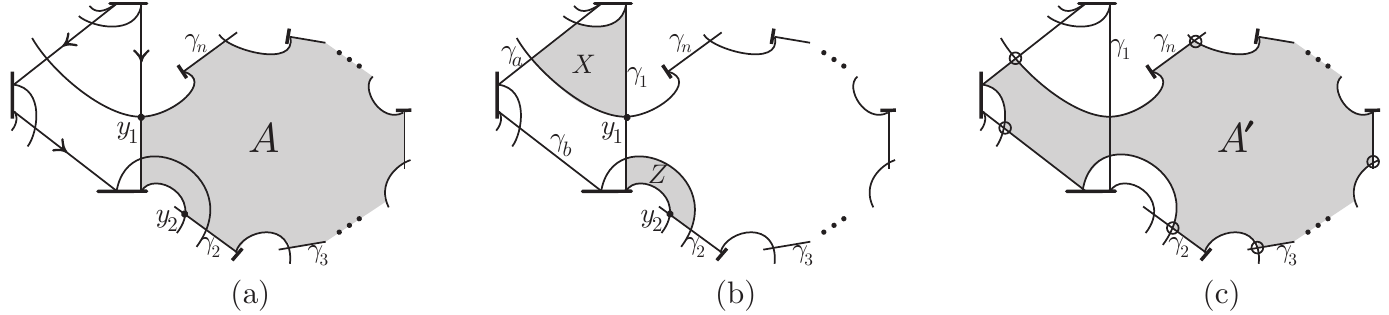}}
\caption[]{The regions $A$ and $A'$.}
\label{fig_assumptions}
\end{figure}

It remains then to consider the case that the orientation of $\gamma_1$ agrees with the positive orientation of $\partial \Delta$. We simply observe that the above proof goes through unchanged, but with the roles of $\Delta$ and $\mon(\Delta)$ reversed. Thus, in the case that $A$ is a bigon (Figure \ref{fig_dual_1}(a)), $A'$ is a 4-gon in $\mon(\Delta)$, while in the general case $A'$ is now obtained from $A$ by removing a 4-gon of $A\cap \Delta$, and extending into $\mon(\Delta)$ (Figure \ref{fig_dual_1}(b)).

\begin{figure}[h!]
\centering \scalebox{1.1}{\includegraphics{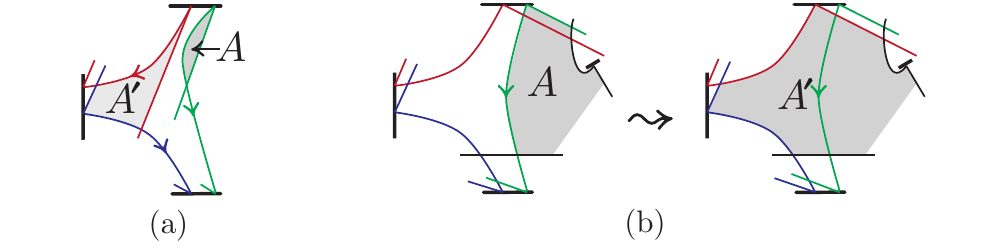}}
\caption[]{}
\label{fig_dual_1}
\end{figure}

\end{proof}

\begin{cor}\label{cor_basis}
Let $\obd$ be an open book decomposition, and $\pg$ admit a basis which stably detects overtwistedness relative to some curve system $L$. Then each basis of $\pg$ stably detects overtwistedness relative to $L$.
\end{cor}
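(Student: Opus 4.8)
The plan is to combine Lemma~\ref{lem_1} with the classical fact that any two bases of $\pg$ are related by a finite sequence of \emph{arc-slides}, where an arc-slide replaces one arc $\gamma_1$ of a basis $\B$ by an arc $\gamma_b$ for which $\gamma_1$ and $\gamma_b$ are two of the three edges of a disc $\Delta \subset \pg$ whose third edge is a further arc $\gamma_a$ of $\B$ and whose interior is disjoint from $\B$ (equivalently, $\gamma_b$ is obtained by sliding an endpoint of $\gamma_1$ along $\partial\pg$ past an endpoint of $\gamma_a$). This is precisely the move appearing in the statement of Lemma~\ref{lem_1}, so it suffices to prove: if $\B$ stably detects overtwistedness relative to $L$ and $\B'' = (\B \setminus \{\gamma_1\}) \cup \{\gamma_b\}$ is obtained from $\B$ by a single arc-slide, then $\B''$ stably detects overtwistedness relative to $L$. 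Granting this, the corollary follows by induction on the length of a slide sequence from the hypothesized detecting basis to an arbitrary one; the induction closes because the conclusion of Lemma~\ref{lem_1} is again of the form ``stably detects overtwistedness relative to $L$'' for the \emph{same} $L$, and a detecting sequence produced in a stabilization, precomposed with the stabilizing moves, is a detecting sequence in the original open book.

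To prove the single-slide claim, unwind the hypothesis: after some sequence of stabilizations, the stabilized triple $(\pg',\mon',\iota(\Gamma))$ carries an overtwisted region $A$ proper with respect to the image $L'$ of $L$, each arc of $\Gamma$ being isotopic to an element of $\B$. Discard arcs from $\Gamma$, one at a time, for as long as an overtwisted region survives, arriving at a minimal collection $\Gamma_A$; one has to check this can be arranged so that the resulting overtwisted region is still proper with respect to $L'$. Now either $\gamma_1$ is isotopic to no arc of $\Gamma_A$, in which case $\Gamma_A$ is isotopic into $\B''$ as well and the same data already witnesses that $\B''$ stably detects overtwistedness relative to $L$; or, after an isotopy, $\gamma_1$ is one of the arcs of $\Gamma_A$, and we form the collection $\Gamma_A \cup \{\gamma_a,\gamma_b\}$ with $\gamma_a$ the pivot of the slide and $\gamma_b$ the new arc, routed so that the triangle $\Delta$ it cuts off with $\gamma_1$ and $\gamma_a$ has interior disjoint from $\B$ and satisfies $\partial L' \cap \Delta = \emptyset$. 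With this collection in the role of $\Gamma$, the four items of the set-up preceding Lemma~\ref{lem_1} all hold ($\Gamma_A$ minimal with an overtwisted region; $A$ proper with respect to $L'$; $\Delta$ an arc-slide domain with edges $\{\gamma_1,\gamma_b,\gamma_a\}$ and $\partial L' \cap \Delta = \emptyset$; and $\B$ a basis containing $\Gamma_A \cup \{\gamma_a\}$ up to isotopy), so Lemma~\ref{lem_1} yields that $\B'' = (\B \setminus \{\gamma_1\}) \cup \{\gamma_b\}$ stably detects overtwistedness relative to $L$.

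The step I expect to be the main obstacle is carrying out these reductions while keeping \emph{all} of the set-up hypotheses of Lemma~\ref{lem_1} valid simultaneously. Discarding an arc from $\Gamma$ can destroy the uniqueness clause of Definition~\ref{def_ot_region}, because a boundary based region that failed to be isolated may become isolated once the relevant intersection points disappear; so the passage to a minimal $\Gamma_A$ must be interleaved with stabilizations chosen to kill the spurious isolated regions --- of exactly the kind used throughout the proof of Lemma~\ref{lem_1} to push intersections into a small neighborhood of a negative corner --- while one tracks a negative corner certifying properness with respect to $L'$ so that it persists. Similarly, exhibiting the prescribed slide as a genuine arc-slide domain of $(\pg',\mon',\Gamma_A \cup \{\gamma_a,\gamma_b\})$ requires routing $\gamma_b$ (and, if necessary, a parallel push-off of $\gamma_a$) so that it bounds a triangle with $\gamma_1$ whose interior meets neither $\mon'(\Gamma)$ nor $L'$; where intersections obstruct this, a further stabilization along a subarc of $\gamma_b$ absorbs them without changing the supported contact structure, and the endpoints of $L'$ --- finitely many points of the boundary --- can be kept off the boundary arc of $\Delta$ by a preliminary isotopy. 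Once these points are settled, Lemma~\ref{lem_1} applies as stated and the induction runs; the remaining ingredient, connectivity of the set of bases of $\pg$ under arc-slides, is standard.
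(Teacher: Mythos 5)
Your overall strategy is the paper's: reduce to a single arc-slide via the standard connectivity of bases under slides, and realize each slide by putting the data into the set-up of Lemma~\ref{lem_1}. But there is a genuine gap in your case analysis. You assume that, after passing to the minimal collection $\Gamma_A$, either no arc of $\Gamma_A$ is isotopic to $\gamma_1$ or exactly one is. Definition~\ref{def_detect} only asks that each element of $\Gamma$ be isotopic to an element of $\B$ as an \emph{unoriented} arc, and the Observation following it stresses that $\Gamma$ may contain several parallel copies of the same basis arc with opposite orientations; a minimal collection supporting an overtwisted region can perfectly well contain two (or more) such parallel copies of $\gamma_1$, since $\partial A$ may traverse the corresponding handle more than once, and discarding any one of them destroys $A$. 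This is not a corner case: the paper's proof of the corollary is organized almost entirely around it. One enlarges the slide domain $\Delta$ to contain all copies $\gamma_1^1,\ldots,\gamma_1^m$ of $\gamma_1$ in $\Gamma$, orders them by nesting in $\Delta$, chooses for each a pivot arc $\gamma_a^i$ adjacent to $\gamma_1^i$ (routed so that no endpoint of $L$ and no attaching sphere of the previously used stabilization handles separates $\gamma_a^i$ from $\gamma_1^i$ along $\partial\pg$, so that the triple really bounds an arc-slide domain in the stabilized page), and applies Lemma~\ref{lem_1} once per copy, accumulating the stabilization sequences $S^i$. Your single application of Lemma~\ref{lem_1} only treats multiplicity at most one, so you need to add this iteration, including the verification that after each step the new collection still sits inside a basis together with the next pivot, as required by item (4) of the set-up.

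Two smaller remarks. First, the difficulty you flag about discarding arcs destroying the uniqueness clause does not actually arise: by condition (2) of Definition~\ref{def_ot_region} every interior intersection point of $\Gamma\cap\mon(\Gamma)$ is already a corner of $A$, hence lies on $\Gamma_A\cap\mon(\Gamma_A)$, so passing to the subcollection changes neither the set of interior intersection points nor the class of isolated boundary based regions, and both the overtwisted region and its properness with respect to $L'$ survive for free. Second, you cannot dispose of $\partial L'\cap\Delta=\emptyset$ by ``a preliminary isotopy'' of $L'$, since Definition~\ref{def_proper} fixes $\partial L$; the correct move, as above, is to choose the routing of $\gamma_a$ and $\gamma_b$ (hence of $\Delta$) so as to avoid the finitely many points of $\partial L'$.
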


\begin{proof}
Let $\B,\Gamma,S$ and $S'$ be as in Definition \ref{def_detect}.  Throughout the proof, given a stabilization sequence $S$, we will use $\iota_S$ to denote the inclusion associated to $S$. Suppose then that $\B'$ is another basis, obtained from $\B$ by arc slide $\{\gamma_1,\gamma_a\} \leadsto \{\gamma_a,\gamma_b\}$. We enlarge the domain $\Delta$ of the slide to contain each element of $\Gamma_1$ isotopic to $\gamma_1$, and label the collection of such arcs $\{\gamma_1^i\}, i=1,2\ldots,m$, such that $\gamma_1^j$ lies in the 4-gon component of $\Delta$ cut along $\gamma_1^k$ if and only if $k<j$ (Figure \ref{fig_arcslide}(a)).

Let $\gamma_a^1$ denote an arc in the isotopy class of $\gamma_a$, isotoped within $\Delta$ such that no component of either $\partial L$ or of the attaching sphere of any stabilization handle from the sequence $S$ lies between $\gamma_a^1$ and $\gamma^1_1$ in $\partial \pg \cap \partial \Delta$ (Figure \ref{fig_arcslide}(b)). There is then an arc $\gamma_b^1$ in the isotopy class of $\gamma_b$, such that $\iota_S(\gamma_a^1),\iota_S(\gamma_b^1)$, and $\iota_S(\gamma_1^1)$ are edges of an arc-slide domain $\Delta^1 \subset \iota_S(\Delta)$. 

Let $\B_1$ be a basis of the stabilized surface containing $\iota_S(\Gamma)$ and $\B_1'$ the result of the arc-slide $\{\iota_S(\gamma_1^1),\iota_S(\gamma_a^1)\} \leadsto \{\iota_S(\gamma_a^1),\iota_S(\gamma_b^1)\}$  (Figure \ref{fig_arcslide}(c)). Then, by Lemma \ref{lem_1}, there exists a stabilization sequence $S^1$ (which is of course actually just a pair of stabilizations) such that $\iota_{S^1S}((\Gamma \setminus \{\gamma_1^1\}) \cup \{\gamma_a^1,\gamma_b^1\})$ determines an overtwisted region, proper with respect to $S'(L)$ (for some subsequence $S'$ of $S^1S$). We may then proceed in the obvious way: for each $2\leq i\leq m$, letting $\gamma_a^i$ denote an arc in the isotopy class of $\gamma_a$, such that $\iota_{S^{i-1} \cdots S^2S^1S}(\gamma_a^i)$ is adjacent to  $\iota_{S^{i-1} \cdots S^2S^1S}(\gamma_1^i)$, we define $\Gamma_1^i := (\Gamma^{i-1}_1 \setminus \{\gamma_1^i\}) \cup \{\gamma_a^i,\gamma_b^i\}$ (where $\Gamma^0_1 := \Gamma$). Again, by Lemma \ref{lem_1}, there exists a stabilization sequence $S^i$ such that  $\iota_{S^i \cdots S^2S^1S}(\Gamma^i_1)$ has an overtwisted region proper with respect to $S'(L)$, for $S'$ a subsequence of $S^i \cdots S^2S^1S$. Moreover, each element of $\Gamma_1^m$ (Figure \ref{fig_arcslide}(d)) is isotopic to one of $\B'$. As any two bases of $\pg$ are related by such arc-slides (see e.g. \cite{hkm2}), we are done.

\begin{figure}[h!]
\centering \scalebox{1.1}{\includegraphics{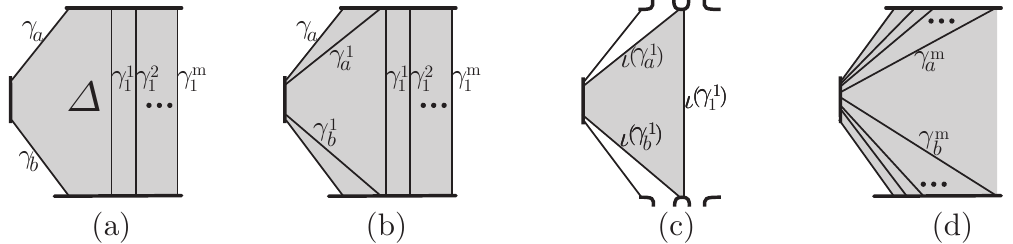}}
\caption[]{}
\label{fig_arcslide}
\end{figure}

\end{proof}

Finally:

\begin{cor}\label{cor_independence}
Let $\obd$ and $(\pg',\mon')$ be open book decompositions supporting some common contact structure on $M^3$. Then if there exists some basis $\B$ of $\pg$ such that, for any curve system $L$, $\B$ stably detects overtwistedness relative to $L$, then the same is true of $(\pg',\mon')$.

\end{cor}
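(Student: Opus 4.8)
The plan is to route the argument through Giroux's theorem (Theorem \ref{thm_giroux}): two open book decompositions support isotopic contact structures on $M$ if and only if they are related by a finite chain of isotopies, positive stabilizations, and positive destabilizations. Hence it suffices to prove that the relevant property is invariant under a single positive stabilization, in each direction. Here the ``relevant property'' is most conveniently phrased, using Corollary \ref{cor_basis}, as ``for every curve system $L$, every basis stably detects overtwistedness relative to $L$'' --- by Corollary \ref{cor_basis} this is equivalent to the hypothesis imposed on $\obd$, and the same corollary lets us check the stabilization invariance using, on each side, whichever single basis is most convenient, then restore an arbitrary basis at the end.

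So let $(\Pg,\Mon)$ be the stabilization of $\obd$ along a properly embedded essential arc $\sigma\subset\pg$, so that $\Pg=\pg\cup h$ for a $1$-handle $h$, $\Mon=\tau_s\mon$, and $s$ is the union of $\sigma$ with the core of $h$. First I would fix a basis $\B$ of $\pg$ containing (an arc isotopic to) $\sigma$; then, with $c$ the cocore of $h$, the set $\B\cup\{c\}$ is a basis of $\Pg$, so it suffices to detect overtwistedness using $\B\cup\{c\}$ on the $\Pg$ side and $\B$ on the $\pg$ side. Next I would set up a dictionary between the two pictures: an arc collection $\Gamma\subset\pg$ with elements isotopic into $\B$ is viewed directly inside $\Pg$ (using no copy of $c$); a curve system $L\subset\pg$ is viewed directly inside $\Pg$; and a curve system $L'\subset\Pg$ is first isotoped to meet $c$ minimally, so that $L'\cap h$ becomes a (possibly empty) union of arcs parallel to $c$. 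The statement to establish is that under this correspondence $(\pg,\mon,\Gamma)$ stably admits an overtwisted region proper with respect to the given curve system if and only if $(\Pg,\Mon,\Gamma)$ does, with the curve system transported as above.

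The technical core --- and the step I expect to be the main obstacle --- is that $\tau_s$ does not fix $\mon(\Gamma)$: it drags $\mon(\Gamma)$ around $s$ wherever $\mon(\Gamma)$ crosses $\sigma$, so an overtwisted region of $(\pg,\mon,\Gamma)$ need not persist verbatim in $(\Pg,\Mon,\Gamma)$, and one must in addition keep track of both the handle $h$ and the arcs of $L'$ running over it. I would deal with this exactly as in the opening of the proof of Lemma \ref{lem_1}: after a preliminary stabilization along an arc isotopic to $\sigma$ and a further one along an arc isotopic to $\mon(\sigma)$, the discrepancy between $\Mon$ and $\mon$ on the remaining arcs --- and on $L$, together with the effect of the handle-crossings of $L'$ --- is pushed into a collar neighbourhood of $\partial\pg$, where, by Definitions \ref{def_ot_region} and \ref{def_proper}, it affects neither the existence and uniqueness of an overtwisted region nor the properness condition. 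With this normalisation in place, the overtwisted region, its negative corners, and the isotopy of the curve system witnessing properness all transfer between the two sides, and the stabilization subsequence required by Definition \ref{def_detect} is assembled from the given one together with the one or two normalising stabilizations. Running this equivalence in both directions and invoking Corollary \ref{cor_basis} once more to restore an arbitrary basis then yields the single-stabilization invariance, and hence, via the reduction of the first paragraph, the corollary.
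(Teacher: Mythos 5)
Your overall skeleton is the same as the paper's: reduce via Giroux's theorem to a single stabilization or destabilization, and use Corollary \ref{cor_basis} to work with whichever basis is convenient on each side. But the step you yourself flag as ``the main obstacle'' is exactly where the argument has a genuine gap, and your proposed fix does not close it. Choosing $\B$ to contain $\sigma$ does nothing to control $\tau_s\mon(\Gamma)$: wherever $\mon(\Gamma)$ crosses $\sigma$, the stabilizing twist drags it over the new handle, producing new intersections among the arcs of $\tau_s\mon(\Gamma)$ and with $\Gamma$ and $L$ that are spread along all of $s$, not confined to a boundary collar. The preliminary stabilizations at the start of Lemma \ref{lem_1} are not a tool for this: there, $\sigma_1$ and $\sigma_2$ are chosen parallel to the specific arcs $\gamma_b$ and $\mon(\gamma_a)$ being created by the arc slide, and their purpose is to clear the intersections of those two particular arcs with the rest of the data inside the slide domain; they do not ``push the discrepancy between $\tau_s\mon$ and $\mon$ into a collar of $\partial\pg$,'' and no bounded number of such stabilizations will, since the effect of $\tau_s$ on $\mon(\Gamma)$ grows with the geometric intersection number $\mon(\Gamma)\cap\sigma$. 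The paper's key move, which is absent from your proposal, is to choose the basis so that this discrepancy is empty: take a handle decomposition of $\pg$ whose unique $0$-handle is a neighborhood of $\sigma$, let $\C$ be the co-cores of its $1$-handles, and set $\B:=\mon^{-1}(\C)$. Then $\mon(\Gamma)$ can be assumed disjoint from $\sigma$, hence from $s$, so $\tau_s\mon(\Gamma)=\mon(\Gamma)$ on the nose and the overtwisted region, its uniqueness, and properness all transfer verbatim.

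Two smaller points. First, your treatment of the curve system on the stabilized side is vaguer than what is needed: the paper pinches $L'$ off the handle by an isotopy supported near the co-core, so that it genuinely becomes a curve system in $\pg$; leaving $L'\cap h$ as arcs parallel to $c$ does not give you an object on the old page. Second, the destabilization direction is not the mirror image of the stabilization direction and needs its own argument: an arc collection subordinate to $\B\cup\{c\}$ in $\Pg$ may contain copies of the co-core $c$, and these must be removed before the data descends to $\pg$. The paper does this by arc-sliding an endpoint of the co-core over another basis element (legitimate by Corollary \ref{cor_basis}), after which every basis arc is isotopic, in the destabilized page, to an arc of the old surface. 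Without these two ingredients the ``dictionary'' you set up only runs in one direction, and even that direction rests on the unjustified normalization above.
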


\begin{proof}
By Giroux (Theorem \ref{thm_giroux}), it is sufficient to show that our property is preserved under stabilization and destabilization. In particular, it is sufficient to consider the case that $(\pg',\mon')$ is obtained from $\obd$ by a single stabilization/destabilization. 

For the case of a stabilization, let $\obd \leadsto (\pg',\tau_s\mon)$ be a stabilization via arc $\sigma$, and $L'$ a curve system in $\pg'$ (Figure \ref{fig_stab_2}(a)). Letting $\gamma$ denote the co-core of the stabilizing handle, we then let $L$ denote the result of `pinching' $L'$ by an isotopy supported in a neighborhood of $\gamma$ which contracts the maximal segment of $\gamma$ with endpoints in $L'$ (Figure \ref{fig_stab_2}(b)). We may then consider $L$ as a curve system in $\pg$ (Figure \ref{fig_stab_2}(c)).

\begin{figure}[h!]
\centering \scalebox{1.9}{\includegraphics{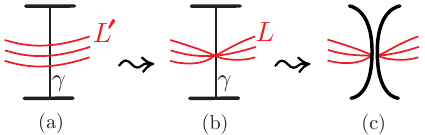}}
\caption[]{}
\label{fig_stab_2}
\end{figure}

Taking a handlebody decomposition of $\pg$ in which a neighborhood of $\sigma$ is the unique 0-handle, the set $\C$ consisting of the co-cores of the 1-handles gives a basis for $\pg$. Moreover, $\B := \mon^{-1}(\C)$ is another basis and has the property that $\mon(\B) \cap \sigma = \emptyset$. By Corollary \ref{cor_basis}, $\B$ stably detects overtwistedness relative to $L$, so we can find arc collection $\Gamma$, and stabilization sequences $S$ and $S'$ as in Definition \ref{def_detect}. We may of course assume $\Gamma$ lies in any neighborhood of $\B$, so that $\mon(\Gamma) \cap \sigma = \emptyset$, and whenever $\gamma_i$ and $\gamma_j$ are parallel arcs in $\Gamma$, they are again parallel in $\pg'$. But then we may apply the same stabilization sequence to $(\pg',\tau_s\mon)$, and (in the stabilized page $\pg''$) have $S\tau_s\mon(\Gamma) = S\mon(\Gamma)$. In particular then, as $(\pg'',S\mon, \Gamma)$ has an overtwisted region proper with respect to $S'(L)$, $(\pg'',S\tau_s\mon, \Gamma)$ has (the same) overtwisted region, proper with respect to $S'(L')$. Thus $\B \cup \{\gamma\}$ is a basis of $\pg'$ detecting overtwistedness relative to $L'$.

On the other hand, for a destabilization, we may of course choose the basis $\B$ of $\pg$ to contain the co-core $\gamma$ of the destabilization handle. Then letting $\B'$ denote the result of sliding an endpoint of $\gamma$ over some other element of $\B$, by Corollary \ref{cor_basis}, $\B'$ stably detects overtwistedness relative to $L$. Moreover, in $\pg'$, each element of $\B'$ is isotopic to an element of the basis $\B \setminus \{\gamma\}$. Thus any collection $\Gamma$ in $\pg$ satisfying the conditions of Definition \ref{def_detect} for $\B'$ is the image of the obvious inclusion of another such collection in $\pg'$. Finally, any curve system in $\pg'$ is again a curve system in $\pg$, so we are done.

\end{proof}

\section{Inconsistency, tightness, and Legendrian surgery}\label{sec_proofs}

We are now in a position to prove Theorems \ref{thm_tight} and \ref{thm_surgery}. 

\subsection{Consistency and tightness}

We start with (a slightly strengthened version of):

\begin{thm_1}
Let $M$ be a closed oriented 3-manifold, and $\xi$ a positive, co-oriented contact structure on $M$. Then the following are equivalent:

\begin{enumerate}

\item $\xi$ is overtwisted.

\item Some open book decomposition supporting $(M,\xi)$ is inconsistent.

\item Each open book decomposition supporting $(M,\xi)$ is inconsistent.

\item For any open book decomposition $\obd$ supporting $\xi$, and any basis $\B$ and curve system $L$ in $\pg$, $\B$ stably detects overtwistedness relative to $L$.

\item There exists an open book decomposition $\obd$ supporting $\xi$, and basis $\B$, such that for any curve system $L$ in $\pg$, $\B$ stably detects overtwistedness relative to $L$.

\end{enumerate} 

\end{thm_1}

\begin{proof}

In light of Giroux's classification theorem, equivalence of (4) and (5) is of course a restatement of Corollaries \ref{cor_basis} and \ref{cor_independence}. On the other hand (4) trivially implies (3), which in turn trivially implies (2). We will show then that (2) implies (1), and (1) implies (5). 

To start with, we generalize a construction due to Goodman (\cite{go}) to demonstrate an overtwisted disc in $(M,\xi)$ whenever $(M,\xi)$ is supported by an inconsistent open book decomposition. In particular, let $\obd$ be a supporting open book, with overtwisted region $A$, supported by minimal $\Gamma = \{\gamma_1,\gamma_2,\ldots,\gamma_n\}$, which we index such that $\gamma_i \cap \mon(\gamma_j)$ is a corner for $A$ for $i<n$ and $j = i+1$, or $i=n$ and $j=1$ (in fact one may assume $n=1$ - see Lemma \ref{lem_destab} - but we prefer to go through the construction for the general case). We then consider the suspension $S_i$ of $\gamma_i$ in the mapping torus of $\obd$, which we extend over the binding by attaching a meridional disc along each $\{p\} \times S^1$, for $p \in \partial \gamma_i$. We thus have a collection of embedded discs (one for each element of $\Gamma$) which we label $D_i$, and $n$ positive boundary-intersection points $\partial D_{i<n} \cap \partial D_{i+1}$ and $\partial D_n \cap \partial D_1$. We then `resolve' $\cup_iD_i$ at each intersection point $p$ by adding a pair of small triangles from the page $\pg_0$ through $p$ in the unique way which preserves the boundary orientation (Figure \ref{fig_suspension}). Smoothing the result via an isotopy relative to the boundary, and then pushing each $\partial \gamma_i$ into $\pg$, we obtain an embedded annulus $C \looparrowright M$, such that the boundary is contained in $\pg$, and exactly one boundary component of $C$ bounds a disc (namely, our original region $A$) in $\pg$. Finally, then, `capping off' a boundary component of $C$ via this disc, we obtain an embedded disc $D$ in $M$; following the arguments of Goodman, which in turn rely on the `Legendrian realization principle' of Honda \cite{ho1}, we may make $\partial D$ Legendrian. Moreover, the Thurston-Bennequin number of $\partial D$ is just given by the intersection of $D$ with a push-off of the boundary along $\pg$, so is zero. We conclude that $D$ is an overtwisted disc. 

\begin{figure}[h!]
\centering \scalebox{1.8}{\includegraphics{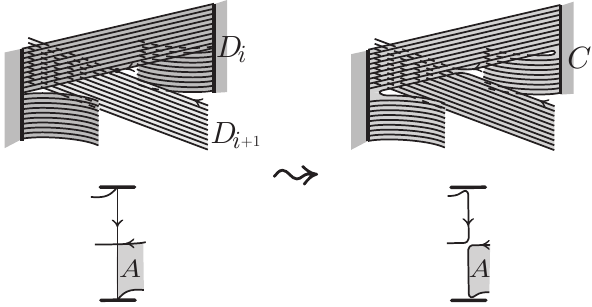}}
\caption[]{To the left: above, the discs $D_i$ and $D_{i+1}$, in a neighborhood of $\gamma_i$, illustrating the foliation of the mapping torus by the pages; below, the restriction to $\pg_0$. To the right, the result $C$ of resolving the intersections.}
\label{fig_suspension}
\end{figure}

For the final implication ($(1) \Rightarrow (5)$), suppose that $\xi$ is overtwisted. Using Eliashberg's homotopy classification of overtwisted contact structures \cite{el} it is straightforward to find an open book decomposition $\obd$ supporting $\xi$ which is a negative stabilization (i.e. replace the Dehn twist in Definition \ref{def_stab} with its inverse) of some other open book (see \cite{go} or \cite{hkm} for a proof). Let $\gamma$ denote the co-core of the stabilizing 1-handle, $L$ a collection of curves and arcs, and $U(\gamma)$ a neighborhood of $\gamma$ disjoint from $\partial L$ (Figure \ref{fig_stab}(a)). Orienting $\gamma$ arbitrarily, let $\sigma_1$ denote a boundary-parallel arc in a neighborhood of $\partial^-\gamma$ which intersects $\gamma$ exactly once (Figure \ref{fig_stab}(a)), $\sigma_2$ an arc isotopic to $\gamma$, isotoped in $U(\gamma)$ to lie to the right of $\gamma$, and not intersect $\sigma_1$ (Figure \ref{fig_stab}(b)), and $\sigma_3$ an arc isotopic to $\mon(\gamma)$, isotoped relative to its intersection with $\sigma_1$ by pushing the endpoints against the positive orientation of $\partial \pg$ in a neighborhood of $\partial \gamma$ disjoint from $\partial\sigma_1$ and $\partial\sigma_2$ (Figure \ref{fig_stab}(c)). It is then straightforward to check (Figure \ref{fig_stab}(d)) that after the associated stabilizations, our bigon is preserved, and is proper with respect to $L$. In particular then any basis of $\pg$ containing $\gamma$ stably detects overtwistedness relative to $L$.

\begin{figure}[h!]
\centering \scalebox{1.9}{\includegraphics{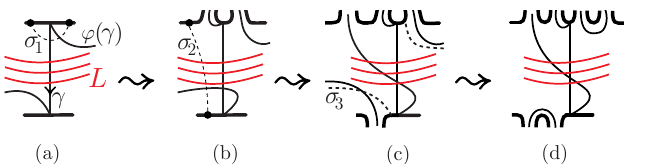}}
\caption[]{}
\label{fig_stab}
\end{figure}

\end{proof}

\subsection{Tightness and Legendrian surgery}

We gather for reference a pair of simple observations:

\begin{lem}\label{lem_destab}
Let $A$ be an overtwisted region in $\triple$, proper with respect to some curve system $L$. If $A$ is not a bigon, then there is $\gamma \in \Gamma$, and triple $(\pg',\mon',\Gamma \setminus \{\gamma\})$ which is obtained by a destabilization of $\triple$, and contains overtwisted $A'$ with two fewer sides than $A$, again proper with respect to $L$.

\end{lem}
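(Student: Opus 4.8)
The plan is to perform a destabilization that removes one of the arcs in $\Gamma$ bounding the overtwisted region $A$, reducing the number of sides of $A$ by two. Since $A$ is not a bigon, $n = |\Gamma_A| \geq 2$, and there is at least one negative corner $y_i \in \gamma_i$ in the interior of $\pg$ which is \emph{not} the corner witnessing properness with respect to $L$. The idea is to destabilize along an arc that ``absorbs'' this corner. Concretely, I would choose a negative corner $y_i$ of $A$ distinct from the properness corner $y$, look at the two edges of $\partial A$ meeting at $y_i$ (one along $\gamma_i$, one along $\mon(\gamma_i)$ or $\mon(\gamma_{i-1})$ depending on indexing), and exhibit an embedded arc $\sigma$ in $\pg$, disjoint from $\Gamma \setminus \{\gamma_i\}$ and from $\partial L$, such that cutting $\pg$ along $\sigma$ (equivalently: $\gamma_i$ is the co-core of a stabilizing handle for $(\pg', \mon')$, with $\sigma$ the corresponding stabilizing arc) realizes $\triple$ as a stabilization of $(\pg', \mon', \Gamma \setminus \{\gamma_i\})$. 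The candidate for $\sigma$ is a push-off of $\gamma_i$ itself together with a small boundary-parallel piece near $\partial^+\gamma_i$ or $\partial^-\gamma_i$ — mirroring the stabilizing arcs $\sigma_1, \sigma_2, \sigma_3$ used in the $(1)\Rightarrow(5)$ part of Theorem~\ref{thm_tight}, run in reverse.

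The key steps, in order, are: (1) Fix the indexing so that $\gamma_i$ has a negative corner $y_i \neq y$, and examine a neighborhood of $y_i$: the two edges of $A$ at $y_i$ run along $\gamma_i$ and along $\mon(\gamma_{i-1})$. (2) Verify that $\gamma_i$ meets $\mon(\Gamma)$ in exactly the corner(s) of $A$ lying on it — this uses minimality is \emph{not} assumed, so instead I would use isolation of $A$ together with the fact that any extra intersection point on $\gamma_i$ not a corner of $A$ would have to pair up to form an incident region, contradicting uniqueness; more carefully, I would first isotope $\Gamma$ to remove inessential intersections of $\gamma_i$ with $\mon(\Gamma)$, checking this does not destroy $A$ or its properness. (3) Show that $\gamma_i$ together with a suitable boundary arc bounds a disc realizing the stabilization: i.e. there is a 1-handle decomposition of $\pg$ in which $\gamma_i$ is the co-core of a single handle $h$, the rest of $\pg$ deformation-retracts onto $\pg'$, and $\mon$ restricted to $h$ agrees with $\tau_s \circ \mon'$ for the appropriate stabilizing curve $s$. (4) Identify $A'$: cutting $A$ along $\gamma_i$ and collapsing the handle $h$ merges the two edges at $y_i$ and the two edges at the adjacent $\partial\Gamma$-corners into single edges, so $A$ loses the corner $y_i$ and its two neighboring $\partial\Gamma$-corners collapse to one, for a net loss of two sides; then check $A'$ is still embedded, isolated, boundary-based (the surviving $\partial\Gamma$-corners stay positive), and unique in $(\pg', \mon', \Gamma\setminus\{\gamma_i\})$ — uniqueness again by the argument that an incident region for $A'$ would pull back to an incident region for $A$. (5) Check properness is preserved: since $y \neq y_i$, the properness corner $y$ and its neighborhood survive intact under the destabilization, and $L$ (which was disjoint from the handle neighborhood by the choice of $\sigma$ disjoint from $\partial L$, plus an isotopy) can still be pushed into a neighborhood of $y$ in $(\pg', \mon')$.

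The main obstacle I anticipate is step (4), specifically verifying that $A'$ is again an overtwisted region in the strict sense of Definition~\ref{def_ot_region} — in particular that it remains \emph{isolated} (condition (2)) and \emph{unique} (condition (3)) after destabilizing. Collapsing the handle can in principle create new interior intersection points of $\Gamma\setminus\{\gamma_i\}$ with its image, or create new competing isolated boundary-based regions, so one has to argue that every such point/region in the destabilized picture lifts back to one in $\triple$, contradicting that $A$ was overtwisted there. This is exactly the kind of ``incident region pulls back'' bookkeeping carried out in the proof of Lemma~\ref{lem_1}, and I would model the argument on that; the sign bookkeeping at the corners (ensuring the $\partial\Gamma$-corners of $A'$ are positive, which needs the orientation convention that $\mon$ reverses arc orientations) is the fiddly part but is routine once the picture is set up correctly. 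The remaining steps are essentially local pictures near $y_i$ and near the binding, analogous to Figures~\ref{fig_stab} and \ref{fig_suspension}.
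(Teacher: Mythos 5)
Your proposal follows essentially the same route as the paper: recognize that an arc of $\Gamma$ bounding $A$ is the co-core of a stabilization $1$-handle (justified by isolation of $A$ and positivity of the endpoints), destabilize, and observe that the effect on $A \cup \Gamma \cup \mon(\Gamma)$ is a resolution of the corresponding negative corner followed by an isotopy pushing the result off $\partial \pg$, yielding the $2(n-1)$-gon $A'$. The only real difference is the choice of arc --- the paper removes the element of $\Gamma$ met first travelling from the properness corner $y$ against the positive orientation of $\partial A$, whereas you remove any $\gamma_i$ whose negative corner is distinct from $y$ --- and both choices leave a neighborhood of $y$ untouched, so properness is preserved either way.
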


\begin{proof}
Assuming $L \cap \Gamma$ is non-empty, by Definition \ref{def_proper} there is some negative corner $y$ of $A$ such that all intersections $L \cap (\Gamma \cup \mon(\Gamma))$ can be assumed to occur in any neighborhood of $y$. Let $\gamma$ then denote the element of $\Gamma$ encountered first traveling around $\partial A$ from $y$ against the positive orientation. Now from the definition of an overtwisted region it is clear that $\gamma$ is the co-core of a stabilization 1-handle; destabilizing, we see (Figure \ref{fig_destab}) that the effect of the destabilization on $A \cup \Gamma \cup \mon(\Gamma)$ can be realized as a resolution of the negative corner of $A$ in $\gamma$, followed by an isotopy to push the result away from $\partial \pg$ near $\partial \gamma$ (as in the proof of Theorem \ref{thm_tight}). The new region $A'$ then clearly has all desired properties.

\begin{figure}[h!]
\centering \scalebox{1.9}{\includegraphics{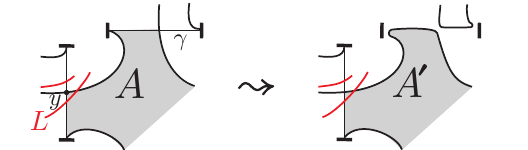}}
\caption[]{}
\label{fig_destab}
\end{figure}
\end{proof}

\begin{lem}\label{lem_braid}
Let $\obd \leadsto (\pg',\tau_s \mon)$ be a stabilization of an open book decomposition, and $L$ a simple closed curve in $\pg$. Then each of $(\pg',\tau^{-1}_{\tau_s(L)}\tau_s \mon)$ and $(\pg',\tau^{-1}_L \tau_s \mon)$ are stabilizations of $(\pg,\tau^{-1}_L \mon)$.
\end{lem}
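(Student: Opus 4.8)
The plan is to exploit the well-known conjugation identity $\psi \tau_c \psi^{-1} = \tau_{\psi(c)}$ for a diffeomorphism $\psi$ and a simple closed curve $c$, together with the fact (recalled in the commented-out observation earlier in the excerpt) that reordering stabilizing handle attachments — after compensating the stabilizing curve by the appropriate Dehn twist — again yields a sequence of stabilizations. Concretely, a stabilization $\obd \leadsto (\pg',\tau_s\mon)$ is performed along an arc $\sigma$, with $s$ the simple closed curve formed by $\sigma$ together with the core of the new 1-handle; crucially $\sigma$, hence $s$, is independent of the monodromy, so the \emph{same} handle attachment along $\sigma$ is also a stabilization of \emph{any} open book with page $\pg$, in particular of $(\pg,\tau_L^{-1}\mon)$, producing $(\pg',\tau_s\tau_L^{-1}\mon)$.

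First I would handle $(\pg',\tau_{\tau_s(L)}^{-1}\tau_s\mon)$. Here $L$ lies in $\pg \subset \pg'$, and by the conjugation identity $\tau_{\tau_s(L)}^{-1} = \tau_s\tau_L^{-1}\tau_s^{-1}$, so
\[
\tau_{\tau_s(L)}^{-1}\tau_s\mon = \tau_s\tau_L^{-1}\tau_s^{-1}\tau_s\mon = \tau_s\bigl(\tau_L^{-1}\mon\bigr).
\]
Thus $(\pg',\tau_{\tau_s(L)}^{-1}\tau_s\mon)$ is literally the stabilization of $(\pg,\tau_L^{-1}\mon)$ along the same arc $\sigma$, and we are done with this half.

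For $(\pg',\tau_L^{-1}\tau_s\mon)$ I would argue that $\tau_L^{-1}$ and $\tau_s$ can be swapped at the cost of modifying the stabilizing arc. Since $L \subset \pg$ is disjoint from the co-core $\gamma$ of the stabilizing handle, one can choose the stabilizing arc $\sigma'$ for the \emph{first} attachment to be disjoint from $L$; then attaching the handle along $\sigma'$ to $(\pg,\tau_L^{-1}\mon)$ commutes past $\tau_L^{-1}$, giving that $(\pg',\tau_L^{-1}\tau_{s'}\mon)$ — with $s'$ the closed curve from $\sigma'$ and the new core — is a stabilization of $(\pg,\tau_L^{-1}\mon)$; it then remains to check $\tau_L^{-1}\tau_{s'}\mon$ equals $\tau_L^{-1}\tau_s\mon$, i.e. that $s$ and $s'$ are isotopic in $\pg'$, which holds since both are obtained by capping the fixed co-core arc across the handle by a $\partial\pg$-parallel completion. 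Equivalently, and perhaps more cleanly, one observes $\tau_L^{-1}\tau_s = \tau_{\tau_L^{-1}(s)}\tau_L^{-1}$ by the conjugation identity, so $(\pg',\tau_L^{-1}\tau_s\mon) = (\pg',\tau_{\tau_L^{-1}(s)}\tau_L^{-1}\mon)$, and $\tau_L^{-1}(s)$ still meets the new handle's co-core exactly once (as $L$ is disjoint from the co-core), so this presents the book as a stabilization of $(\pg,\tau_L^{-1}\mon)$ along the arc $\tau_L^{-1}(\sigma)$.

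The only delicate point is the bookkeeping in the second case: verifying that $\tau_L^{-1}(s)$ (respectively $s'$) is still a legitimate stabilizing curve, i.e. that it crosses the new 1-handle exactly once and that capping off recovers a genuine Hopf plumbing — this is where a figure is worth drawing, but it follows immediately from $L \cap \gamma = \emptyset$ together with the fact that Dehn twisting along a curve disjoint from the co-core does not change the intersection pattern with the handle. Everything else is a direct application of $\psi\tau_c\psi^{-1} = \tau_{\psi(c)}$.
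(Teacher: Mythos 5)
Your proposal is correct and is essentially the paper's proof: the first case follows directly from the conjugation identity $\psi\tau_c\psi^{-1}=\tau_{\psi(c)}$, and your ``more cleanly'' argument for the second case --- refactoring $\tau_L^{-1}\tau_s$ as $\tau_{\tau_L^{-1}(s)}\tau_L^{-1}$ and noting that $\tau_L^{-1}(s)$ still crosses the new handle exactly once because $L$ is disjoint from it --- is precisely the paper's argument. (Your first sketch for the second case, replacing $\sigma$ by an arc $\sigma'$ disjoint from $L$ and claiming $s\simeq s'$, is the weaker variant, since $\sigma$ need not be isotopable off $L$; the conjugation version is the one to keep.)
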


\begin{proof}
Each of the statements follows easily from the well-known (and easily verified) fact that, for a given diffeomorphism $\psi$ of a surface $\pg$, and simple closed curve $L$ in $\pg$, we have $\psi \tau_L \psi^{-1} = \tau_{\psi(L)}$. The first statement indeed follows directly, so consider the second: re-factoring $\tau^{-1}_L\tau_s$ as $\tau_{\tau^{-1}_L s} \tau^{-1}_L$, observe that, as $L$ does not cross the stabilization 1-handle, $\tau^{-1}_L s$ again crosses it exactly once. 
\end{proof}

\begin{thm_2}\label{thm_legendrian}
If $(M,\xi)$ is obtained by Legendrian surgery on tight $(M',\xi')$, for $M'$ a closed, oriented 3-manifold, and $\xi'$ a positive co-oriented contact structure, then $(M,\xi)$ is tight. 
\end{thm_2}

\begin{proof}
Letting $L$ denote the Legendrian knot along which the surgery is performed, it follows from work of Giroux (see e.g. \cite{e}) that we may find an open book $\obd$ supporting $(M,\xi)$ such that $L$ is a curve on a page, and furthermore that $(M',\xi')$ is supported by $(\pg,\tau_L^{-1} \circ \mon)$. Suppose then that $(M,\xi)$ were overtwisted. In light of (the slightly strengthened version of) Theorem \ref{thm_tight}, we may find an collection $\Gamma$ in $\pg$, and a sequence of stabilizations $S$, such that the stabilized triple $(\pg', S\mon,\iota(\Gamma))$ has an overtwisted region $A$, and further a subsequence $S'$ of $S$ such that $\iota(\Gamma) \cap S' (L)$ is contained in a single element of $\iota(\Gamma)$. Using Lemma \ref{lem_destab}, we may destabilize each remaining element of $\Gamma$ to obtain an open book $(\pg'',\mon'')$ in which $\gamma$ is (after a bigon-removing isotopy of $\mon''(\gamma)$) mapped to the left at an endpoint, and $S'(L)$ still lies on the page. In particular then $\gamma$ is again mapped to the left in $(\pg'',\tau^{-1}_{S'(L)} \mon'')$, so by \cite{hkm} supports an overtwisted contact structure. On the other hand, (using Lemma \ref{lem_braid}), $(\pg',\tau^{-1}_{S'(L)}S\mon)$ is a common stabilization of $(\pg'',\tau^{-1}_{S'(L)} \mon'')$ and $(\pg,\tau_L^{-1} \circ \mon)$, so that $\xi'$ is overtwisted, a contradiction.

\end{proof}
As an aside, we note that rather than appealing to \cite{hkm} in the above proof, we could just as well simply show directly that $(\pg', \tau^{-1}_{S'(L)} S\mon,\iota(\Gamma))$ stably has an overtwisted region; indeed that it has (using the terminology of Section \ref{sec_properties}) a boundary-based region is immediate, so it is left to show that this region may be made isolated through stabilizations. An explicit algorithm to do just this can be found in \cite{w3}.

\end{document}